\renewcommand{\arraystretch}{1}
\theoremstyle{plain}
\newtheorem{thm}{\bf Theorem}[section]
\newtheorem{con}[thm]{\bf Construction}
\newtheorem{lem}[thm]{\bf Lemma}
\theoremstyle{definition}
\theoremstyle{remark}
\title{\bf A generalization of product construction of multimagic squares
}
\author
 {   {  \small  Yong Zhang$^{1,3}$,\ \ Kejun Chen$^{2}$, \ \ Wen Li$^{1}$ } \\ %\footnote{ Corresponding to:  K.Chen(yctuckj@163.com)}
            {\footnotesize \it  1. School of Mathematical Sciences, Yancheng Teachers University, Jiangsu 224002, China}\\
            \footnotesize { \it 2.  School of Mathematics and Physics, Taizhou University, Jiangsu 225300,   China}\\
             \footnotesize { \it 3. Department of Computer Science, Lakehead University, Thunder Bay, ON P7B 5E1}\\
 }
\date{}
\begin{document}

\maketitle
% abstract and keywords
{\baselineskip 15pt
\begin {abstract}
\noindent
   In this paper, constructions of multimagic squares are investigated. Diagonal Latin squares and Kronecker products are used to get some constructions  of multimagic squares. Consequently,  some new families of compound multimagic squares are obtained.\\
   {\footnotesize\textbf{Keywords:}  Multimagic square,  Latin square, Kronecker product.}
\end{abstract}}

 {\begingroup\makeatletter\let\@makefnmark\relax\footnotetext{Supported by the Natural Science Foundations of China ( No.11301457, No.11371308).}

\vskip 0.5cm
\section{Introduction}

 An $n\times n$ matrix $A$ consisting of $n^2$   integers is a {\it general magic square of order $n$} if the sum of $n$ elements in each row, each column and each of two   diagonals are the same.  The sum is the \emph{magic number}.   A general magic square of order $n$ is  a \emph{magic square}, denoted by  MS$(n)$,  if the entries  are $n^2$ consecutive integers.  Usually, the $(i,j)$ entry of a matrix $A$ is denoted by $a_{i,j}$.    A lot of work has been done on   magic squares (\cite{Abe,Ahmed,Andrews,Cammann,Handbook,Nordgren}).

Let $t$ be a positive  integer.  A   general magic square $M$   is a \emph{general $t$-multimagic square}  if  $M^{*1}$,$M^{*2}$, $\cdots$, $M^{*t}$ are all general magic squares, where  $M^{*e}=(m_{i,j}^{e})$, $e=1,2,\dots,t$. A  general $t$-multimagic square of order $n$ is  a \emph{$t$-multimagic square}, denoted by  MS($n,t$), if the entries  are $n^2$ consecutive integers.
Usually, a $2$-multimagic square is called a \emph{bimagic square} and a $3$-multimagic square   is called a \emph{trimagic squares}.

In 2007, Derksen, Eggermont and  van den Essen  \cite{Derksen} used the matrices over rings to provide a constructive procedure.
 Recently, Chen and Li \cite{Chen2} introduced a magic pair of Latin squares to get a construction of bimagic squares.    Zhang, Lei and Chen \cite{Zhang,Zhang2} used large sets of orthogonal arrays to get a construction  of multimagic squares.

 In this paper,  some new constructions of multimagic squares are provided by using diagonal Latin squares with Kronecker products.

A {\it Latin square} of order $m$, denoted by LS$(n)$, is an $m\times m$ array such that every row and every column is a permutation of an $n$-set $S$.    A Latin square   over $S$ is called {\it diagonal}  if it has the   property that the main diagonal and back diagonal are both permutations of $S$.
  Two LS$(n)$s are called {\it orthogonal} if each symbol in the first square meets each symbol in the second square exactly once when they are superposed.

 Let $I_n=\{0,1,\cdots, n-1\}$ and  let  $J_{m\times n}$ be a matrix with the  entries being  all 1s.
For an MS$(n,t)$ $A$, if  the smallest entry integer
is $s$, then it is readily verified that   $A-sJ_{n\times n}$ is  an MS$(n,t)$   over $I_{n^2}$.  In the sequel, the element set of  an MS$(n,t)$ is always taken to be $I_{n^2}$.
  For an $m\times n$ matrix $A$, the rows and columns of $A$ are indexed by $I_m$ and $I_n$, respectively.

Given an $m\times n$ matrix $A$ and a $p\times q$ matrix $B$, the \emph{Kronecker product} $A\otimes B$ is the $mp\times nq$ matrix given by
$A\otimes B=(a_{i,j}B)$, where $a_{i,j}B=(a_{i,j}b_{s,t})$, $i\in I_m$, $j\in I_n$, $s\in I_p$, $t\in I_q$.
For an MS$(m)$  $A$    and an MS$(n)$  $B$, we define a \emph{product} of $A, B$ as follows.
\begin{eqnarray}
\nonumber\mbox{}\hspace{1.8in}   A*B=n^2 A\otimes J_{n\times n}+ J_{m\times m}\otimes B,
\end{eqnarray}
i.e.,
\begin{center}
$A*B={\footnotesize\left(\begin{array}{ cccc}
n^2a_{0,0}J_{n\times n}+B&n^2a_{0,1}J_{n\times n}+B&\cdots&n^2a_{0,m-1}J_{n\times n}+B\\
n^2a_{1,0}J_{n\times n}+B&n^2a_{1,1}J_{n\times n}+B&\cdots&n^2a_{1,m-1}J_{n\times n}+B\\
\vdots&\vdots&\vdots&\vdots\\
n^2a_{m-1,0}J_{n\times n}+B&n^2a_{m-1,1}J_{n\times n}+B&\cdots&n^2a_{m-1,m-1}J_{n\times n}+B
\end{array}
 \right).\ \ \ \ \ }$
 \end{center}
 It was proved in \cite{Kim} that $A*B$ is an  MS$(mn)$. Such a magic square is also  called a \emph{compound} magic square or a \emph{composite} magic square  in earlier papers such as  \cite{Andrews,Derksen} and recent papers such as \cite{Li}.   Further,    Derksen et al \cite{Derksen}  proved that
if $A$ is an  MS$(m,t)$ and $B$ is an  MS($n,t$), then the product $A*B$ is an MS$(mn,t)$.

The main purpose of this paper is to generalize the product construction to get more composite multimagic squares. Some new constructions of multimagic squares are provided in Section 2, and some new families of  bimagic squares and trimagic squares are  given in  Section 3 and Section 4, respectively.

\section{Constructions of multimagic squares}

Constructions of multimagic squares are investigated in this section.
In the product construction of multimagic squares, one can find that $J_{m\times m}\otimes B$    is a general $t$-multimagic square of order $mn$  whenever  $B$  is an MS$(n,t)$. We shall prove that the conclusion is still true if $J_{m\times m}\otimes B$ is replaced by a general $t$-multimagic square $\mathcal{B}$ as follows:
\begin{center}
$\mathcal{B}={\footnotesize\left(\begin{array}{ cccc}
B_{0,0}&B_{0,1}&\cdots&B_{0,m-1}\\
B_{1,0}&B_{1,1}&\cdots&B_{1,m-1}\\
\vdots&\vdots&\vdots&\vdots\\
B_{m-1,0}&B_{m-1,1}&\cdots&B_{m-1,m-1}
\end{array}
 \right),}$\\
 \end{center}
\noindent where $B_{u,v}$ is an MS$(n,t-1)$, $u,v\in I_m$. Such  a partitioned general $t$-multimagic square $\mathcal{B}$ is  denoted by PGMS$(mn,t)$.  We have the following.

 \begin{con}\label{newthm}
If there is an  MS$(m,t)$    and
 there is a PGMS$(mn,t)$,
    then there is an MS$(mn,t)$.
\end{con}
\begin{proof}
 Let  $A$  be an  MS$(m,t)$ and let  $\mathcal{B}$ be a  PGMS$(mn,t)$,  $\mathcal{B}=(B_{u,v})$, $B_{u,v}=(b^{(u,v)}_{r,s}), u,v\in I_m, r,s\in I_n$,  and let
 \begin{center}
  $ C=n^2 A\otimes J_{n\times n}+ \mathcal{B}$,
   \end{center}
i.e.,
$$c_{i,j}=n^2a_{u,v}+b^{(u,v)}_{r,s}, i=nu+r, j=nv+s, u,v\in I_m,\  r,s\in I_n.$$
We shall prove that $C$ is an MS$(mn,t)$.

 Since $\{a_{u,v}|u,v\in I_m\}=I_{m^2}$ and $\{b^{(u,v)}_{r,s}|r,s\in I_n\}=I_{n^2}, u,v\in I_m$, we have
 $$\{c_{i,j}|i,j\in I_{mn}\}=\{n^2a_{u,v}+b^{(u,v)}_{r,s}|u,v\in I_m,\  r,s\in I_n\}=I_{(mn)^2},$$
 which means that  the $(mn)^2$ entries of $C$ are $0,1,\cdots, (mn)^2-1$.

For an MS$(n,t)$ $A$, the magic number of $A^{*e}$ is $S_e(n)$, where $$S_e(n)=\frac{\sum\limits_{k\in I_{n^2}}k^e}{n}, e=1,2,\cdots,t.$$ It remains  to prove that for each $e=2, 3, \cdots,t$,
\begin{center}
$\sum\limits_{j=0}^{mn-1}c_{i,j}^e=S_e(mn), i\in I_{mn}; \sum\limits_{i=0}^{mn-1}c_{i,j}^e=S_e(mn), j\in I_{mn};$\\
\ \ \ \ $\sum\limits_{i=0}^{mn-1}c_{i,i}^e=S_e(mn); \sum\limits_{i=0}^{mn-1}c_{i,mn-1-i}^e=S_e(mn). \ \ \ \ \  \  \ \ \ \ \ \ \ \  \  \ \ \ $
\end{center}

In fact, by the definition of $C$,  for each $e=2, 3, \cdots,t$, we have
\begin{center}
$  \sum\limits_{j=0}^{mn-1}c_{i,j}^e=\sum\limits_{k=0}^{e}\binom{e}{k} n^{2(e-k)} \sum\limits_{v=0}^{m-1}a_{u,v}^{e-k}\sum\limits_{s=0}^{n-1}(b^{(u,v)}_{r,s})^k.$
\end{center}
 By hypothesis,  $ \sum\limits_{v=0}^{m-1}a_{u,v}^{e}=S_{e}(m), u\in I_m, e=2, 3, \cdots,t$,  and $\sum\limits_{s=0}^{n-1}(b^{(u,v)}_{r,s})^e=S_{e}(n), r\in I_n, e=2,3,\cdots,t-1$.

 If $e<t$, we have
\begin{center}
 $ \sum\limits_{j=0}^{mn-1}c_{i,j}^e= \sum\limits_{k=0}^{e}\binom{e}{k} n^{2(e-k)} S_{e-k}(m)S_{k}(n).$
\end{center}
If   $e=t$, since  $B$ is a general $t$-multimagic square,  $\sum\limits_{v=0}^{m-1}\sum\limits_{s=0}^{n-1}(b^{(u,v)}_{r,s})^t=mS_{t}(n),  s\in I_n$,  we have
\begin{center}
$  \sum\limits_{j=0}^{mn-1}c_{i,j}^e=\sum\limits_{k=0}^{t-1}\binom{t}{k} n^{2(t-k)} \sum\limits_{v=0}^{m-1}a_{u,v}^{t-k}\sum\limits_{s=0}^{n-1}(b^{(u,v)}_{r,s})^k+
\sum\limits_{v=0}^{m-1}\sum\limits_{s=0}^{n-1}(b^{(u,v)}_{r,s})^t $\\
$=\sum\limits_{k=0}^{t-1}\binom{t}{k} n^{2(t-k)} S_{t-k}(m)S_{k}(n)+ mS_{t}(n),\ \        \  \ \  $\\
$=\sum\limits_{k=0}^{t}\binom{t}{k} n^{2(t-k)} S_{t-k}(m)S_{k}(n).\ \ \ \ \  \ \ \ \ \    \ \ \ \ \ \ \  \ \ $
\end{center}
Therefore for each    $e(2\leq e\leq t)$,  we have
\begin{center}
 $\sum\limits_{j=0}^{mn-1}c_{i,j}^e=\sum\limits_{k=0}^{e}\binom{e}{k} n^{2(e-k)} S_{e-k}(m)S_{k}(n).$
\end{center}
The right hand of the above equality only depends on $m, n, e$, denoted by N$(m, n, e)$. Therefore
\begin{center}
$\sum\limits_{i=0}^{mn-1}\sum\limits_{j=0}^{mn-1}c_{i,j}^e=mnN(m, n, e).$
 \end{center}
 On the other hand, we have
 \begin{center}
  $\sum\limits_{i=0}^{mn-1}\sum\limits_{j=0}^{mn-1}c_{i,j}^e=\sum\limits_{d=0}^{(mn)^2-1}d^e=mnS_e(mn).$
   \end{center}
which implies  $N(m, n, e)=S_e(mn)$, thus $\sum\limits_{j=0}^{mn-1}c_{i,j}^e=S_e(mn)$.

Similarly, one can prove that
\begin{center}
$\sum\limits_{i=0}^{mn-1}c_{i,j}^e=S_e(mn), j\in I_{mn}, $
$\sum\limits_{i=0}^{mn-1}c_{i,i}^e=\sum\limits_{i=0}^{mn-1}c_{i,mn-1-i}^e=S_e(mn).$
\end{center}
Therefore, $C$ is an MS$(mn,t)$.
\end{proof}

   In the following, we shall showed that a PGMS$(mn,t)$ can be obtained by using a set of complementary $t$-multimagic squares and a diagonal Latin square.

Let   $B_0, B_1, \cdots, B_{m-1}$ be $m$ MS($n,t$)s, where $B_s=(b_{i,j}^{(s)})$, $i,j\in I_n$, $s\in I_m$. They  are \emph{complementary}   if the following four conditions are satisfied.
\vskip 5pt

 \noindent  $(R1)$ \ \ \ \ \ \ \ \ \ \ \ \ \ \  \   $\sum\limits_{s\in I_m}\sum\limits_{j\in I_n}(b_{i,j}^{(s)})^{t+1}=mS_{t+1}(n),   i\in I_n$; \vskip 5pt

 \noindent  $(R2)$ \ \ \ \ \ \ \ \ \ \ \ \ \ \  \   $\sum\limits_{s\in I_m}\sum\limits_{i\in I_n}(b_{i,j}^{(s)})^{t+1}=mS_{t+1}(n),   j\in I_n$; \vskip 5pt

 \noindent  $(R3)$ \ \ \ \ \ \ \ \ \ \ \ \ \ \  \  $\sum\limits_{s\in I_m}\sum\limits_{i\in I_n}(b_{i,i}^{(s)})^{t+1}=mS_{t+1}(n)$;

  \noindent  $(R4)$ \ \ \ \ \ \ \ \ \ \ \ \ \ \  \  $\sum\limits_{s\in I_m}\sum\limits_{i\in I_n}(b_{i,n-1-i}^{(s)})^{t+1}=mS_{t+1}(n)$.

 \noindent  The set of  $B_0, B_1, \cdots, B_{m-1}$ is denoted by $m$-SCMS($n,t$). Usually,   $t$  omitted if $t=1$.

   \vskip10pt
As an example,   a set of  2 complementary magic squares is listed below.

 \vskip10pt
\noindent {\bf Example 1} \ \
Let
{\renewcommand\arraystretch{0.7}
\setlength{\arraycolsep}{1.5pt}
\begin{center}
     $B_0=${\scriptsize$\left(
        \begin{array}{cccc}
2&12&5&11\\
9&7&14&0\\
15&1&8&6\\
4&10&3&13
        \end{array}
        \right),$}
             $B_1=${\scriptsize$\left(
        \begin{array}{cccc}
1&15&6&8\\
10&4&13&3\\
12&2&11&5\\
7&9&0&14
        \end{array}
        \right).  $}
\end{center}}
\noindent  Then  $\{B_0, B_1\}$ is a $2$-SCMS($4$). In fact, it is easily checked that $B_0, B_1$ are MS$(4)$s and\\
$\mbox{}\hspace{1.5in}
 \sum\limits_{j\in I_4}(b_{i,j}^{(0)})^2+\sum\limits_{j\in I_4}(b_{i,j}^{(1)})^2=2S_2(4)=620,   i\in I_4$,\\
$\mbox{}\hspace{1.5in}  \sum\limits_{i\in I_4} (b_{i,j}^{(0)})^2+ \sum\limits_{i\in I_4} (b_{i,j}^{(1)})^2=620,   j\in I_4,$\\
$\mbox{}\hspace{1.5in}\sum\limits_{i\in I_4}(b_{i,i}^{(0)})^2+\sum\limits_{i\in I_4}(b_{i,i}^{(1)})^2=620$,\\
$\mbox{}\hspace{1.5in}\sum\limits_{i\in I_4}(b_{i,3-i}^{(0)})^2+\sum\limits_{i\in I_4}(b_{i,3-i}^{(1)})^2=620$.
\qed

\begin{con} \label{conmscms}
Let $t\geq2$.   If there is an  MS$(m,t)$ and there is an $m$-SCMS$(n,t-1)$,  then there is an MS$(mn,t)$.
\end{con}
 \begin{proof} By hypothesis we know that $m\geq4$ since there is no MS$(3,2)$ (see \cite{Boyer}).
By Reference \cite{Gergely} we know that a diagonal  LS$(m)$ exists if and only if $m\geq4$.  Let $D$ be a diagonal LS$(m)$ over $I_m$.
   Let $\{B_{0},B_{1},\cdots, B_{m-1}\}$ be an $m$-SCMS$(n,t-1)$, where $B_s=(b_{x,y}^{(s)})$, $x,y\in I_n, s\in I_m$. and let $P_m(u,v)$ be a matrix of order $m$ with the $(u,v)$ entry 1 and all other entries 0s.  Define a matrix $\mathcal{B}$ of order $mn$ as follows.
\begin{center}
$\mathcal{B}=\sum\limits_{u,v\in I_m}P_m(u,v)\otimes B_{d_{u,v}}.$
\end{center}
 We shall show that $\mathcal{B}$  is an PGMS$(mn,t)$.

 In fact,  if we denote $\mathcal{B}=(b_{i,j})_{mn\times mn}, i,j\in I_{mn}$,  then
\begin{center}
$b_{i,j}=b^{(d_{u,v})}_{x,y}, i=nu+x, j=nv+y, u,v\in I_m,\  x,y\in I_n$.
\end{center}
Since $D$ is  a diagonal  LS$(m)$, we have $\{d_{u,v}|v\in I_m\}=I_m, u\in I_m$. For each $i\in I_{mn}$ and for each $e\in \{1,2,\cdots, t\}$,
\begin{center}$
  \sum\limits_{j=0}^{mn-1}(b_{i,j})^{e}=\sum\limits_{v=0}^{m-1} \sum\limits_{y=0}^{n-1} (b^{(d_{u,v})}_{x,y})^{e}
=\sum\limits_{v=0}^{m-1} \sum\limits_{y=0}^{n-1} (b^{(v)}_{x,y})^{e}.
%=mS_{e}(n).
 $\end{center}
 If $e<t$, then $\sum\limits_{v=0}^{m-1} \sum\limits_{y=0}^{n-1} (b^{(v)}_{x,y})^{e}=m\sum\limits_{y=0}^{n-1} (b^{(v)}_{x,y})^{e}=mS_{e}(n)$ since $B_0, B_1, \cdots, B_{m-1}$ are MS$(n,t-1)$s.   If $e=t$, then $\sum\limits_{v=0}^{m-1} \sum\limits_{y=0}^{n-1} (b^{(v)}_{x,y})^{t}=mS_{t}(n)$ since $B_0, B_1, \cdots, B_{m-1}$ are complementary. Thus  $\sum\limits_{j=0}^{mn-1}(b_{i,j})^{e}=mS_{e}(n)$ for each $e\in \{1,2,\cdots, t\}$.

Similarly, for each $e\in \{1,2,\cdots, t\}$ we have
\begin{center}
$\sum\limits_{i=0}^{mn-1}(b_{i,j})^{e}=mS_{t}(n), j\in I_{mn}; $\\
$ \sum\limits_{i=0}^{mn-1}(b_{i,i})^{e}=mS_{e}(n)$, \ \ \ \ \ \ \ \ \ \ \ \ \\
  $\sum\limits_{i=0}^{mn-1}(b_{i,mn-1-i})^{e}=mS_{e}(n). $ \ \ \ \
 \end{center}
 So, $\mathcal{B}$ is a PGMS$(mn,t)$.  By   Construction   \ref{newthm} the proof is completed.
 \end{proof}

We use the following example   to illustrate the proofs of  Construction \ref{conmscms}.

\vskip5pt
\noindent {\bf Example 2}
  Let $B_i=B_0, B_{i+1}=B_1, i=2,4,6$, where $B_0, B_1$ are given in Example 1. It is readily checked that $\{B_0, B_1, \cdots, B_7\}$ is an $8$-SCMS($4$).
A DLS$(8)$ over $I_8$ is listed below.

{\renewcommand\arraystretch{0.6}
\setlength{\arraycolsep}{2.5pt}
\begin{center}
$D=${\scriptsize$\left(
        \begin{array}{cccccccc}
0	&3	&6	&5	&4	&7	&2	&1\\
1	&2	&7	&4	&5	&6	&3	&0\\
5	&6	&3	&0	&1	&2	&7	&4\\
4	&7	&2	&1	&0	&3	&6	&5\\
2	&1	&4	&7	&6	&5	&0	&3\\
3	&0	&5	&6	&7	&4	&1	&2\\
7	&4	&1	&2	&3	&0	&5	&6\\
6	&5	&0	&3	&2	&1	&4	&7
 \end{array}
\right).$}
\end{center}}
\noindent
 Let $\mathcal{B}=\sum\limits_{u,v\in I_8}P_8(u,v)\otimes B_{d_{u,v}}$, i.e.,
{\renewcommand\arraystretch{0.6}
\setlength{\arraycolsep}{0.6pt}
\begin{center}
$\mathcal{B}=${\tiny$\left(
        \begin{array}{cccc|cccc|cccc|cccc|cccc|cccc|cccc|cccc}
 2&12&5&11&1&15&6&8&2&12&5&11&1&15&6&8&2&12&5&11&1&15&6&8&2&12&5&11&1&15&6&8\\
9&7&14&0&10&4&13&3&9&7&14&0&10&4&13&3&9&7&14&0&10&4&13&3&9&7&14&0&10&4&13&3\\
15&1&8&6&12&2&11&5&15&1&8&6&12&2&11&5&15&1&8&6&12&2&11&5&15&1&8&6&12&2&11&5\\
4&10&3&13&7&9&0&14&4&10&3&13&7&9&0&14&4&10&3&13&7&9&0&14&4&10&3&13&7&9&0&14\\\hline
1&15&6&8&2&12&5&11&1&15&6&8&2&12&5&11&1&15&6&8&2&12&5&11&1&15&6&8&2&12&5&11\\
10&4&13&3&9&7&14&0&10&4&13&3&9&7&14&0&10&4&13&3&9&7&14&0&10&4&13&3&9&7&14&0\\
12&2&11&5&15&1&8&6&12&2&11&5&15&1&8&6&12&2&11&5&15&1&8&6&12&2&11&5&15&1&8&6\\
7&9&0&14&4&10&3&13&7&9&0&14&4&10&3&13&7&9&0&14&4&10&3&13&7&9&0&14&4&10&3&13\\\hline
1&15&6&8&2&12&5&11&1&15&6&8&2&12&5&11&1&15&6&8&2&12&5&11&1&15&6&8&2&12&5&11\\
10&4&13&3&9&7&14&0&10&4&13&3&9&7&14&0&10&4&13&3&9&7&14&0&10&4&13&3&9&7&14&0\\
12&2&11&5&15&1&8&6&12&2&11&5&15&1&8&6&12&2&11&5&15&1&8&6&12&2&11&5&15&1&8&6\\
7&9&0&14&4&10&3&13&7&9&0&14&4&10&3&13&7&9&0&14&4&10&3&13&7&9&0&14&4&10&3&13\\\hline
2&12&5&11&1&15&6&8&2&12&5&11&1&15&6&8&2&12&5&11&1&15&6&8&2&12&5&11&1&15&6&8\\
9&7&14&0&10&4&13&3&9&7&14&0&10&4&13&3&9&7&14&0&10&4&13&3&9&7&14&0&10&4&13&3\\
15&1&8&6&12&2&11&5&15&1&8&6&12&2&11&5&15&1&8&6&12&2&11&5&15&1&8&6&12&2&11&5\\
4&10&3&13&7&9&0&14&4&10&3&13&7&9&0&14&4&10&3&13&7&9&0&14&4&10&3&13&7&9&0&14\\\hline
2&12&5&11&1&15&6&8&2&12&5&11&1&15&6&8&2&12&5&11&1&15&6&8&2&12&5&11&1&15&6&8\\
9&7&14&0&10&4&13&3&9&7&14&0&10&4&13&3&9&7&14&0&10&4&13&3&9&7&14&0&10&4&13&3\\
15&1&8&6&12&2&11&5&15&1&8&6&12&2&11&5&15&1&8&6&12&2&11&5&15&1&8&6&12&2&11&5\\
4&10&3&13&7&9&0&14&4&10&3&13&7&9&0&14&4&10&3&13&7&9&0&14&4&10&3&13&7&9&0&14\\\hline
1&15&6&8&2&12&5&11&1&15&6&8&2&12&5&11&1&15&6&8&2&12&5&11&1&15&6&8&2&12&5&11\\
10&4&13&3&9&7&14&0&10&4&13&3&9&7&14&0&10&4&13&3&9&7&14&0&10&4&13&3&9&7&14&0\\
12&2&11&5&15&1&8&6&12&2&11&5&15&1&8&6&12&2&11&5&15&1&8&6&12&2&11&5&15&1&8&6\\
7&9&0&14&4&10&3&13&7&9&0&14&4&10&3&13&7&9&0&14&4&10&3&13&7&9&0&14&4&10&3&13\\\hline
1&15&6&8&2&12&5&11&1&15&6&8&2&12&5&11&1&15&6&8&2&12&5&11&1&15&6&8&2&12&5&11\\
10&4&13&3&9&7&14&0&10&4&13&3&9&7&14&0&10&4&13&3&9&7&14&0&10&4&13&3&9&7&14&0\\
12&2&11&5&15&1&8&6&12&2&11&5&15&1&8&6&12&2&11&5&15&1&8&6&12&2&11&5&15&1&8&6\\
7&9&0&14&4&10&3&13&7&9&0&14&4&10&3&13&7&9&0&14&4&10&3&13&7&9&0&14&4&10&3&13\\\hline
2&12&5&11&1&15&6&8&2&12&5&11&1&15&6&8&2&12&5&11&1&15&6&8&2&12&5&11&1&15&6&8\\
9&7&14&0&10&4&13&3&9&7&14&0&10&4&13&3&9&7&14&0&10&4&13&3&9&7&14&0&10&4&13&3\\
15&1&8&6&12&2&11&5&15&1&8&6&12&2&11&5&15&1&8&6&12&2&11&5&15&1&8&6&12&2&11&5\\
4&10&3&13&7&9&0&14&4&10&3&13&7&9&0&14&4&10&3&13&7&9&0&14&4&10&3&13&7&9&0&14\\
 \end{array}
\right).$}
\end{center}
One can check that $\mathcal{B}$ is  a PGMS$(32,2)$.

Let $A=P_8-J_{8\times 8}$, where $P_8$ is the   bimagic square of order 8 given by Pfeffermann \cite{PF} in 1891. Then
  \begin{center}
 {\renewcommand\arraystretch{0.6}
\setlength{\arraycolsep}{2.8pt}
$A=$\scriptsize
    $\left(
     \begin{array}{ccccccccccccc }
 55&	33	&7	&56	&17	&46&	8	&30\\
32	&19	&53	&47	&6	&28&	58	&9\\
25&	42	&12&	22	&63&	37	&3	&48\\
18	&4&	34	&29&	52&	11	&45	&59\\
14	&24	&62	&1	&40	&23	&49	&39\\
5	&54	&16	&10&	35&	57	&31	&44\\
60	&15	&41	&51	&26	&0	&38&	21\\
43	&61	&27&36	&13	&50&	20	&2\\
\end{array}
        \right)   $}
 \end{center}
 is a bimagic square over $I_{64}$.
  Let
 $ C=16 A \otimes J_{4\times 4}+\mathcal{B}$, i.e.,
{\renewcommand\arraystretch{0.6}
\setlength{\arraycolsep}{0.6pt}
\begin{center}
$C=${\tiny$\left(
        \begin{array}{cccc|cccc|cccc|cccc|cccc|cccc|cccc|cccc}
882&892&885&891&529&543&534&536&114&124&117&123&897&911&902&904&274&284&277&283&737&751&742&744&130&140&133&139&481&495&486&488\\
889&887&894&880&538&532&541&531&121&119&126&112&906&900&909&899&281&279&286&272&746&740&749&739&137&135&142&128&490&484&493&483\\
895&881&888&886&540&530&539&533&127&113&120&118&908&898&907&901&287&273&280&278&748&738&747&741&143&129&136&134&492&482&491&485\\
884&890&883&893&535&537&528&542&116&122&115&125&903&905&896&910&276&282&275&285&743&745&736&750&132&138&131&141&487&489&480&494\\\hline
513&527&518&520&306&316&309&315&849&863&854&856&754&764&757&763&97&111&102&104&450&460&453&459&929&943&934&936&146&156&149&155\\
522&516&525&515&313&311&318&304&858&852&861&851&761&759&766&752&106&100&109&99&457&455&462&448&938&932&941&931&153&151&158&144\\
524&514&523&517&319&305&312&310&860&850&859&853&767&753&760&758&108&98&107&101&463&449&456&454&940&930&939&933&159&145&152&150\\
519&521&512&526&308&314&307&317&855&857&848&862&756&762&755&765&103&105&96&110&452&458&451&461&935&937&928&942&148&154&147&157\\\hline
401&415&406&408&674&684&677&683&193&207&198&200&354&364&357&363&1009&1023&1014&1016&594&604&597&603&49&63&54&56&770&780&773&779\\
410&404&413&403&681&679&686&672&202&196&205&195&361&359&366&352&1018&1012&1021&1011&601&599&606&592&58&52&61&51&777&775&782&768\\
412&402&411&405&687&673&680&678&204&194&203&197&367&353&360&358&1020&1010&1019&1013&607&593&600&598&60&50&59&53&783&769&776&774\\
407&409&400&414&676&682&675&685&199&201&192&206&356&362&355&365&1015&1017&1008&1022&596&602&595&605&55&57&48&62&772&778&771&781\\\hline
290&300&293&299&65&79&70&72&546&556&549&555&465&479&470&472&834&844&837&843&177&191&182&184&722&732&725&731&945&959&950&952\\
297&295&302&288&74&68&77&67&553&551&558&544&474&468&477&467&841&839&846&832&186&180&189&179&729&727&734&720&954&948&957&947\\
303&289&296&294&76&66&75&69&559&545&552&550&476&466&475&469&847&833&840&838&188&178&187&181&735&721&728&726&956&946&955&949\\
292&298&291&301&71&73&64&78&548&554&547&557&471&473&464&478&836&842&835&845&183&185&176&190&724&730&723&733&951&953&944&958\\\hline
226&236&229&235&385&399&390&392&994&1004&997&1003&17&31&22&24&642&652&645&651&369&383&374&376&786&796&789&795&625&639&630&632\\
233&231&238&224&394&388&397&387&1001&999&1006&992&26&20&29&19&649&647&654&640&378&372&381&371&793&791&798&784&634&628&637&627\\
239&225&232&230&396&386&395&389&1007&993&1000&998&28&18&27&21&655&641&648&646&380&370&379&373&799&785&792&790&636&626&635&629\\
228&234&227&237&391&393&384&398&996&1002&995&1005&23&25&16&30&644&650&643&653&375&377&368&382&788&794&787&797&631&633&624&638\\\hline
81&95&86&88&866&876&869&875&257&271&262&264&162&172&165&171&561&575&566&568&914&924&917&923&497&511&502&504&706&716&709&715\\
90&84&93&83&873&871&878&864&266&260&269&259&169&167&174&160&570&564&573&563&921&919&926&912&506&500&509&499&713&711&718&704\\
92&82&91&85&879&865&872&870&268&258&267&261&175&161&168&166&572&562&571&565&927&913&920&918&508&498&507&501&719&705&712&710\\
87&89&80&94&868&874&867&877&263&265&256&270&164&170&163&173&567&569&560&574&916&922&915&925&503&505&496&510&708&714&707&717\\\hline
961&975&966&968&242&252&245&251&657&671&662&664&818&828&821&827&417&431&422&424&2&12&5&11&609&623&614&616&338&348&341&347\\
970&964&973&963&249&247&254&240&666&660&669&659&825&823&830&816&426&420&429&419&9&7&14&0&618&612&621&611&345&343&350&336\\
972&962&971&965&255&241&248&246&668&658&667&661&831&817&824&822&428&418&427&421&15&1&8&6&620&610&619&613&351&337&344&342\\
967&969&960&974&244&250&243&253&663&665&656&670&820&826&819&829&423&425&416&430&4&10&3&13&615&617&608&622&340&346&339&349\\\hline
690&700&693&699&977&991&982&984&434&444&437&443&577&591&582&584&210&220&213&219&801&815&806&808&322&332&325&331&33&47&38&40\\
697&695&702&688&986&980&989&979&441&439&446&432&586&580&589&579&217&215&222&208&810&804&813&803&329&327&334&320&42&36&45&35\\
703&689&696&694&988&978&987&981&447&433&440&438&588&578&587&581&223&209&216&214&812&802&811&805&335&321&328&326&44&34&43&37\\
692&698&691&701&983&985&976&990&436&442&435&445&583&585&576&590&212&218&211&221&807&809&800&814&324&330&323&333&39&41&32&46\\
 \end{array}
\right).$}
\end{center}}
\noindent One can check that $C$ is   an MS$(32,2)$. \qed

In the following two sections, some families of  multimagic squares are provided to show the importance of  Construction \ref{conmscms}.

\section{ New families of bimagic squares}

In this section,  some new families of bimagic squares are provided by constructing a family of complementary magic squares.

 A {\it Kotzig array} of size $m\times n$, denoted by KA$(m,n)$, is an $m\times n$ array such that each row is a permutation of $\{0,1,2,\cdots,n-1\}$, each column has the same sum. Clearly, the common sum must be $\frac{1}{2}m(n-1)$, which is an integer only if $m(n-1)$ is even.

 Using  Kotzig array and diagonal orthogonal Latin squares we have the following.
 \begin{lem}\label{BCMS-11}
If there is a KA$(m,n)$ and  there is a pair of diagonal orthogonal  LS$(n)$s, then there is an $m$-SCMS$(n)$.
\end{lem}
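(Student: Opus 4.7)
The plan is to build the $m$ required magic squares as ``twists'' of the classical orthogonal-Latin-square construction of a single MS$(n)$, using the $m$ rows of the Kotzig array as the permutations that do the twisting. Writing $L_1=(\ell^{(1)}_{i,j})$ and $L_2=(\ell^{(2)}_{i,j})$ for the two diagonal orthogonal LS$(n)$s over $I_n$ and $K=(k_{s,v})$ for the KA$(m,n)$, I would set
\begin{equation*}
b^{(s)}_{i,j} \;=\; n\,\ell^{(1)}_{i,j} + k_{s,\,\ell^{(2)}_{i,j}}, \qquad s\in I_m,\ i,j\in I_n,
\end{equation*}
and $B_s=(b^{(s)}_{i,j})$. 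Because the $s$-th row of $K$ is a permutation of $I_n$, the array $(k_{s,\ell^{(2)}_{i,j}})_{i,j}$ is a diagonal LS$(n)$ orthogonal to $L_1$, and so by the classical ``$na+b$'' construction each $B_s$ is already an MS$(n)$: its entries form $I_{n^2}$, and every row, every column and (using the diagonal hypothesis) both diagonals have the magic sum $\frac{n(n^2-1)}{2}$.

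The core of the argument is verifying the four complementarity conditions (R1)--(R4). Treating (R1) first, I fix a row index $i$, write $a_j=\ell^{(1)}_{i,j}$ and $c_j=\ell^{(2)}_{i,j}$ (both permutations of $I_n$), and expand
\begin{equation*}
\sum_{s\in I_m}\sum_{j\in I_n}(b^{(s)}_{i,j})^2 \;=\; n^2 m\sum_j a_j^2 \;+\; 2n\sum_j a_j\Bigl(\sum_{s} k_{s,c_j}\Bigr) \;+\; \sum_{j}\sum_{s} k_{s,c_j}^2.
\end{equation*}
The Kotzig column-sum property $\sum_s k_{s,v}=m(n-1)/2$ makes the inner sum in the middle term independent of $c_j$, collapsing that piece to a constant multiple of $\sum_j a_j=\frac{n(n-1)}{2}$. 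For the third sum, since $(c_j)_{j\in I_n}$ runs through $I_n$, I can rewrite it as $\sum_{v\in I_n}\sum_{s\in I_m}k_{s,v}^2$, and the row-permutation property of $K$ then gives $m\sum_{k=0}^{n-1}k^2$. The first term is similarly constant in $i$. Thus the row-sum-of-squares is independent of $i$, and comparing with the global identity $\sum_{s,i,j}(b^{(s)}_{i,j})^2=m\sum_{k\in I_{n^2}}k^2=mn\,S_2(n)$ pins its value to $mS_2(n)$, proving (R1). Conditions (R2)--(R4) follow by the identical expansion applied to columns, the main diagonal, and the back diagonal of $L_1, L_2$; this is exactly where the \emph{diagonal} hypothesis enters, since it is what guarantees that $(a_j),(c_j)$ remain permutations of $I_n$ along both diagonals.

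The main subtlety to watch is the third sum $\sum_j\sum_s k_{s,c_j}^2$: the column \emph{squared} sums of a Kotzig array are \emph{not} required to be constant in general (only the ordinary column sums are), so one cannot evaluate the expression column by column. The correct move is to sum over $j$ first, converting it into $\sum_v\sum_s k_{s,v}^2$ by the row-permutation property of $L_2$, and only then to invoke the row-permutation property of $K$. Once this point is handled cleanly, the remaining bookkeeping for (R2)--(R4) is routine, and the set $\{B_0,B_1,\ldots,B_{m-1}\}$ is the required $m$-SCMS$(n)$.
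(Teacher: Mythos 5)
Your proposal is correct and follows essentially the same route as the paper: the same construction $n\,\ell^{(1)}_{i,j}+k_{s,\ell^{(2)}_{i,j}}$ (the paper writes $c^{(s)}_{i,j}=na_{i,j}+k_{s,b_{i,j}}$), the same expansion of the sum of squares, and the same use of the Kotzig column-sum and row-permutation properties; the only cosmetic difference is that you pin the common value to $mS_2(n)$ by an averaging argument while the paper computes it explicitly.
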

 \begin{proof}
Let $K$ be an KA$(m, n)$, and let $A, B$ be a pair of  orthogonal diagonal LS($n$)s over $I_n$.
Let $$B_s=(b^{(s)}_{i,j}),\  b^{(s)}_{i,j}=k_{s,b_{i,j}},\ i, j\in I_{n},\  s\in I_m.$$
Then  $A, B_s$ are orthogonal diagonal LS($n$)s for each $s\in I_m$. Let
$$C_s=(c^{(s)}_{i,j}),  c^{(s)}_{i,j}=na_{i,j}+b^{(s)}_{i,j},i, j\in I_{n}\  s\in I_m.$$
It is easy to see that $C_s$ is an MS$(n)$ for each $s\in I_m$. We shall prove that $\{C_0,C_1,\dots,C_{m-1}\}$ is an $m$-SCMS($n$).

In fact, for any $i\in I_n$, we have
\begin{center}
$ \sum\limits_{s=0}^{m-1}\sum\limits_{j=0}^{n-1}(c_{i,j}^{(s)})^2
=\sum\limits_{s=0}^{m-1}\sum\limits_{j=0}^{n-1}(na_{i,j}+b^{(s)}_{i,j})^2\ \ \ \ \ \ \ \ \ \ \ \ \ \ \ \ \ \ \ \ \ \ \ \ \ \ \ \ $\\
$\ \ \ \ \ \ \ \ \ \ \ \ \ \ \ \ \ \ \ \ \ \ \ \ \ \ \ \ =n^2\sum\limits_{s=0}^{m-1} \sum\limits_{j=0}^{n-1}a_{i,j}^2
+\sum\limits_{s=0}^{m-1}\sum\limits_{j=0}^{n-1} (b_{i,j}^{(s)})^2
+2n\sum\limits_{j=0}^{n-1} a_{ij} \sum\limits_{s=0}^{m-1}b^{(s)}_{i,j}.$
\end{center}
By the definition of a KA$(m, n)$, we have
\begin{center}
 $\sum\limits_{s=0}^{m-1}b^{(s)}_{i,j}=\sum\limits_{s=0}^{m-1}k_{s,b_{i,j}}=m(n-1)/2, j\in I_n.$
\end{center}
Since $A,B_s$ are Latin squares, we have $\sum\limits_{j=0}^{n-1}a_{i,j}=n(n-1)/2$ and
\begin{center}
$\sum\limits_{j=0}^{n-1}(a_{i,j})^2=\sum\limits_{j=0}^{n-1}(b^{(s)}_{i,j})^2=n(n-1)(2n-1)/6, s\in I_m.$
\end{center}
Noting that     $S_2(n)=n(n^2-1)(2n^2-1)/6$, from above we have
\begin{center}
$ \sum\limits_{s=0}^{m-1}\sum\limits_{j=0}^{n-1}(c_{i,j}^{(s)})^2=m(n^2+1)n(n-1)(2n-1)/6+ n^2(n-1)m(n-1)/2$\\
$=mn(n^2-1)(2n^2-1)/6=mS_2(n).\ \ \ \ $
\end{center}
Similarly, we have
\begin{center}
$\sum\limits_{s=0}^{m-1}\sum\limits_{i=0}^{n-1}(c_{i,j}^{(s)})^2=mS_{2}(n),\ j\in I_n;\ \ \ \ \ \ \ \ \ \ \  \ \ \ \ \   \ $\\
\vskip5pt
$\sum\limits_{s=0}^{m-1}\sum\limits_{i=0}^{n-1}(c_{i,i}^{(s)})^2=\sum\limits_{s=0}^{m-1} \sum\limits_{i=0}^{n-1}({c^{(s)}_{i,n-1-i}})^2=mS_{2}(n).$
\end{center}
So, $\{C_0, C_1, \cdots, C_{m-1}\}$ is an $m$-SCMS($n$).
\end{proof}

%%%%%%%%%%%%%%%%%%%%%%%%%%%%%%
\noindent {\bf Example 4}
Let
{\renewcommand\arraystretch{0.7}
\setlength{\arraycolsep}{2.5pt}
\begin{center}
$A=${\scriptsize$\left(
        \begin{array}{ccccc}
1	&3	&0	&2	&4\\
2	&4	&1	&3	&0\\
3	&0	&2	&4	&1\\
4	&1	&3	&0	&2\\
0	&2	&4	&1	&3
        \end{array}
        \right),$}\ \ \ \
        $B=${\scriptsize$\left(
        \begin{array}{ccccc}
1&2&3&4&0\\
3&4&0&1&2\\
0&1&2&3&4\\
2&3&4&0&1\\
4&0&1&2&3
        \end{array}
        \right).$}
\end{center}}
\noindent Then $A, B$ is a pair of orthogonal diagonal LS$(5)$s.
 A KA($3,5$) $K$ is listed below.
{\renewcommand\arraystretch{0.7}
\setlength{\arraycolsep}{2.5pt}
\begin{center}
$K=${\scriptsize$\left(
        \begin{array}{ccccc}
4&1&3&0&2\\
0&1&2&3&4\\
2&4&1&3&0
        \end{array}
        \right).$}
\end{center}}
\noindent Let $B_s=(b^{(s)}_{i,j}),\  b^{(s)}_{i,j}=k_{s,b_{i,j}},\ i, j\in I_{5},\  s\in I_3,$ i.e.,
{\renewcommand\arraystretch{0.7}
\setlength{\arraycolsep}{2.5pt}
\begin{center}
$B_0=${\scriptsize$\left(
        \begin{array}{ccccc}
1&3&0&2&4\\
0&2&4&1&3\\
4&1&3&0&2\\
3&0&2&4&1\\
2&4&1&3&0
        \end{array}
        \right),$}\ \ \ \
        $B_1=${\scriptsize$\left(
        \begin{array}{ccccc}
1&2&3&4&0\\
3&4&0&1&2\\
0&1&2&3&4\\
2&3&4&0&1\\
4&0&1&2&3
        \end{array}
        \right),$}\ \ \ \
        $B_2=${\scriptsize$\left(
        \begin{array}{ccccc}
4&1&3&0&2\\
3&0&2&4&1\\
2&4&1&3&0\\
1&3&0&2&4\\
0&2&4&1&3
        \end{array}
        \right).$}
\end{center}}
\noindent Let $C_s=(c^{(s)}_{i,j}),  c^{(s)}_{i,j}=na_{i,j}+b^{(s)}_{i,j},i, j\in I_{5}, s\in I_3$, i.e.,
{\renewcommand\arraystretch{0.7}
\setlength{\arraycolsep}{2.5pt}
\begin{center}
$C_0=${\scriptsize$\left(
        \begin{array}{ccccc}
6&18&0&12&24\\
10&22&9&16&3\\
19&1&13&20&7\\
23&5&17&4&11\\
2&14&21&8&15
        \end{array}
        \right),$}\ \ \ \
$C_1=${\scriptsize$\left(
        \begin{array}{ccccc}
6&17&3&14&20\\
13&24&5&16&2\\
15&1&12&23&9\\
22&8&19&0&11\\
4&10&21&7&18
        \end{array}
        \right),$}\ \ \ \
        $C_2=${\scriptsize$\left(
        \begin{array}{ccccc}
9&16&3&10&22\\
13&20&7&19&1\\
17&4&11&23&5\\
21&8&15&2&14\\
0&12&24&6&18
        \end{array}
        \right).$}
\end{center}}
\noindent It is readily checked that $\{C_0, C_1, C_2\}$ is a 3-SCMS$(5)$. \qed

 For the existence of Kotzig array, Wallis \cite{W} proved the following.

 \begin{lem}\label{KKAA} \emph{(\cite{W})}
There is a  KA$(m,n)$ if and only if $m>1$ and $m(n-1)$ is even.
\end{lem}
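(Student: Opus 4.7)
The plan is to prove necessity and sufficiency separately. For necessity, I would double-count the total of all entries: each of the $m$ rows is a permutation of $\{0, 1, \ldots, n-1\}$, so the grand total is $m \cdot n(n-1)/2$; since this is distributed equally over $n$ columns, the common column sum must equal $m(n-1)/2$, which must be an integer. Hence $m(n-1)$ is even. The exclusion $m = 1$ follows because a single permutation of $\{0,1,\ldots,n-1\}$ has pairwise distinct entries, so the ``columns'' cannot all have the same single-entry value when $n \geq 2$.

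For sufficiency I would split on the parity of $n$. When $n$ is even, the parity condition forces $m$ to be even; writing $m = 2k$, I would stack $k$ copies of the two-row block
\[
\begin{pmatrix} 0 & 1 & 2 & \cdots & n-1 \\ n-1 & n-2 & n-3 & \cdots & 0 \end{pmatrix},
\]
whose every column sums to $n-1$, giving total column sum $k(n-1) = m(n-1)/2$. When $n$ is odd, any $m \geq 2$ is allowed: even $m$ is handled exactly as above, while for odd $m \geq 3$ it suffices to build one KA$(3, n)$ and append $(m-3)/2$ complementary pairs. Thus the entire problem reduces to the single case $m = 3$ with $n$ odd.

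The main obstacle is therefore constructing a KA$(3, n)$ for each odd $n \geq 3$. The attempt I would try uses three arithmetic progressions modulo $n$: take $r_1(j) = j$, $r_2(j) \equiv d + 2j \pmod{n}$, and $r_3(j) \equiv 2d + 2j \pmod{n}$ for a suitably chosen shift $d$; each row is a permutation because $\gcd(2, n) = 1$. For instance, with $n = 5$ and $d = 2$ one obtains rows $(0,1,2,3,4)$, $(2,4,1,3,0)$, $(4,1,3,0,2)$ whose columns all sum to $6 = 3(n-1)/2$. The technical content is verifying that the wrap-arounds caused by reducing mod $n$ contribute exactly the corrections needed to make all column sums equal to $3(n-1)/2$; because $n$ is odd, the requisite cancellations do hold, and the verification, though essentially a bookkeeping exercise over residues, is the real engine of the proof.
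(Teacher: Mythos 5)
First, note that the paper offers no proof of this lemma at all: it is quoted as a known result of Wallis \cite{W}, so there is no internal argument to compare yours against. Your necessity argument (double counting forces the column sum $m(n-1)/2$ to be an integer, and $m=1$ fails for $n\ge 2$) is correct, and so is the reduction of sufficiency to the single case of a KA$(3,n)$ with $n$ odd, via stacking the complementary two-row block.

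The genuine gap is in the engine you identify yourself: the proposed KA$(3,n)$ with rows $r_1(j)=j$, $r_2(j)\equiv d+2j$, $r_3(j)\equiv 2d+2j \pmod n$ does not work in general, and the ``requisite cancellations'' you assert are never verified because they are false. Take $n=7$. For $d=3$ the columns of $(j,\;(3+2j)\bmod 7,\;(6+2j)\bmod 7)$ give sums $9,7,\dots$, already unequal at $j=0,1$; checking all seven residues $d=0,1,\dots,6$ shows that no choice of $d$ yields constant column sums. Your $n=5$, $d=2$ example succeeds only by a small-number coincidence (the linear increment $1+2+2=5$ per column happens to equal $n$). A correct replacement is available and elementary: for $n=2k+1$ take $a_j=j$, $b_j=(2k-2j)\bmod n$, and $c_j=3k-a_j-b_j$. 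Then $b$ is a permutation ($b_j=2k-2j$ runs over the even residues for $j\le k$ and $b_j=4k+1-2j$ over the odd ones for $j>k$), and correspondingly $c_j=k+j$ for $j\le k$ and $c_j=j-k-1$ for $j>k$, so $c$ is also a permutation of $I_n$; every column sums to $3k=3(n-1)/2$ by construction. With that substitution the rest of your outline goes through.
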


For  the existence of orthogonal diagonal Latin squares,   Brown et al \cite{Brown} proved the following.

  \begin{lem}\label{ODLSS} \emph{(\cite{Brown})}
A pair of orthogonal diagonal LS($n$)s exists if and only if  $n\not\in\{2, 3, 6\}$.
 \end{lem}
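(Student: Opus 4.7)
The plan is to establish both directions of the biconditional separately. For necessity, the three excluded orders have quick arguments: by the cited Gergely result, no diagonal LS($n$) exists at all for $n\in\{2,3\}$, so any orthogonal pair is vacuously impossible there; and for $n=6$ the Tarry--Euler theorem rules out \emph{any} orthogonal pair of LS($6$)s, hence \emph{a fortiori} any orthogonal diagonal pair. The real content is thus in the sufficiency direction.

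For sufficiency I would split $n\geq 4$, $n\neq 6$, into three regimes. First, for $n$ an odd prime $p\geq 5$, take $A_{i,j}=i+aj\pmod{p}$ and $B_{i,j}=i+bj\pmod{p}$, and verify the scalar conditions ($a$, $b$, $1+a$, $1-a$, $1+b$, $1-b$, $a-b$ all nonzero mod $p$) required to make each square a diagonal Latin square and the pair orthogonal; the choice $(a,b)=(2,3)$ works uniformly for every such $p$. The same idea extends to prime powers $n=p^k\geq 4$ by working over the field of $p^k$ elements rather than $\mathbb{Z}/p$, with a bijection between that field and $I_n$ chosen so that the main and back diagonals remain permutations; the characteristic $2$ and $3$ cases need slightly more care to avoid the obstructions already visible at $p=3$.

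Second, I would exploit Kronecker-product closure: if $(A_1,B_1)$ is an orthogonal diagonal LS pair of order $m$ and $(A_2,B_2)$ one of order $n$, then $A_1\otimes A_2$ and $B_1\otimes B_2$ (under the standard identification $I_m\times I_n\cong I_{mn}$) form an orthogonal diagonal LS pair of order $mn$. The Latin-square and orthogonality properties follow from the standard MacNeish argument; the key new observation is that the main and back diagonals of the tensor product decompose as products of the main and back diagonals of the two factors, so both inherit the permutation property. This bootstraps the prime-power cases up to every composite $n$ all of whose prime-power parts are at least $4$.

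The main obstacle, and where most of the detailed work in Brown et al.\ lies, is handling the orders for which the prime-power-plus-Kronecker recipe fails, namely those $n$ having a prime factor $p\in\{2,3\}$ that does not sit inside a prime-power part of size at least $4$ (for example $n=10,12,14,15,\ldots$). For these one replaces the simple product by more elaborate design-theoretic constructions: singular direct products, prolongations, transversal designs with holes, or direct ad hoc constructions for the smallest exceptions. Once this residual family is covered, the Kronecker bootstrap completes the classification.
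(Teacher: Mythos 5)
The paper offers no proof of this lemma at all: it is quoted directly from Brown, Cherry, Most, Most, Parker and Wallis \cite{Brown}, so there is nothing internal to compare your argument against. Judged on its own terms, your outline gets the easy half right (no diagonal LS$(n)$ exists for $n\in\{2,3\}$, and Tarry's theorem kills $n=6$) and correctly identifies the standard machinery for the generic sufficiency cases: the linear squares $i+aj$ over $\mathbb{Z}/p$ with the conditions $a,b\notin\{0,\pm1\}$ and $a\neq b$, the field version for prime powers, and the fact that Kronecker products preserve orthogonality and both diagonal conditions (under the identification $(i,s)\mapsto ni+s$ one indeed has $mn-1-(ni+s)=n(m-1-i)+(n-1-s)$, so the back diagonal of the product factors as you claim).

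The genuine gap is your last paragraph. The orders not reachable by prime powers plus Kronecker products --- every $n\equiv 2\pmod 4$, every $n$ whose $3$-part is exactly $3$, and so on, an infinite family including $10,12,14,15,18,\dots$ --- are precisely the content of the cited theorem; the prime-power and MacNeish-type cases were already classical, and the entire point of the Brown et al.\ paper (which closes out a sequence of earlier partial results) is to dispose of this residual family. Listing ``singular direct products, prolongations, transversal designs with holes, or direct ad hoc constructions'' names the toolbox without carrying out a single one of these constructions, so the hard part of the theorem is asserted rather than proved. A secondary soft spot: for prime powers $2^k$ and $3^k$ the linear construction over $\mathrm{GF}(q)$ does not automatically produce a \emph{diagonal} Latin square, because the back-diagonal involution $i\mapsto n-1-i$ need not correspond to anything field-linear under an arbitrary bijection $\mathrm{GF}(q)\to I_n$; you flag this but leave it unresolved, and already $q=4$ requires a genuine fix. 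In short, your sketch is a correct roadmap to the literature but not a proof; since the paper itself simply cites \cite{Brown}, that citation remains the appropriate justification here.
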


By Lemma \ref{BCMS-11}, Lemma \ref{KKAA} and Lemma \ref{ODLSS} we have
\begin{lem}\label{BCMS}
There exists an $m$-SCMS$(n)$ whenever $m(n-1)$ is even and $m>1$,  and $n\not\in\{2,3,6\}$.
\end{lem}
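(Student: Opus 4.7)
The plan is to combine the three preceding results in a direct way, since the hypotheses of Lemma \ref{BCMS} are precisely the conjunction of the existence conditions in Lemma \ref{KKAA} and Lemma \ref{ODLSS}, and Lemma \ref{BCMS-11} converts those ingredients into the desired $m$-SCMS$(n)$.

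First I would invoke Lemma \ref{KKAA}: under the hypothesis that $m>1$ and $m(n-1)$ is even, a Kotzig array KA$(m,n)$ exists. Next, using the hypothesis $n\notin\{2,3,6\}$, Lemma \ref{ODLSS} guarantees a pair of orthogonal diagonal Latin squares of order $n$. With both ingredients in hand, Lemma \ref{BCMS-11} immediately produces an $m$-SCMS$(n)$, which is exactly the conclusion.

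Since each of the three cited lemmas does the heavy lifting, there is essentially no obstacle: the proof reduces to checking that the hypotheses match. The only thing worth a brief remark is that the stated condition ``$m(n-1)$ is even'' is precisely the parity requirement in Lemma \ref{KKAA}, so no extra case analysis on the parities of $m$ and $n$ is needed. Thus the proof will consist of one or two sentences chaining the three lemmas together.
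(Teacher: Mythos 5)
Your proposal is correct and matches the paper exactly: the paper derives Lemma \ref{BCMS} by simply citing Lemma \ref{BCMS-11}, Lemma \ref{KKAA} and Lemma \ref{ODLSS} in the same way you chain them, with no further argument. Nothing is missing.
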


By Construction \ref{conmscms} and Lemma \ref{BCMS}, we have the following.

 \begin{lem}\label{con-CMS}
 If there exists an MS$(m,2)$,  then there exists  an MS$(mn,2)$ whenever  $m(n-1)$ is even and $n\not\in\{2,3,6\}$.
 \end{lem}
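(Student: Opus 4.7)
The plan is essentially to chain together the two tools the paper has just developed: Construction \ref{conmscms} and Lemma \ref{BCMS}. Specialising Construction \ref{conmscms} to $t=2$ gives the implication ``MS$(m,2)$ plus $m$-SCMS$(n,1)$ yields MS$(mn,2)$'', and the abbreviation $m$-SCMS$(n) = m$-SCMS$(n,1)$ is exactly what Lemma \ref{BCMS} produces.

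Concretely, I would begin the proof by assuming the hypotheses: an MS$(m,2)$ exists, $m(n-1)$ is even, and $n\notin\{2,3,6\}$. I would then invoke Lemma \ref{BCMS} to obtain an $m$-SCMS$(n)$; this needs the side condition $m>1$, which I would verify by noting that the existence of an MS$(m,2)$ forces $m\geq 4$ (since MS$(1,2)$, MS$(2,2)$, MS$(3,2)$ do not exist, the last of these being recorded in the proof of Construction \ref{conmscms} via the reference to \cite{Boyer}). With both ingredients in hand, I would apply Construction \ref{conmscms} with $t=2$ directly to conclude that an MS$(mn,2)$ exists.

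There is essentially no technical obstacle here: the lemma is a packaging statement that merely combines the two previous results. The only small bookkeeping item is confirming the $m>1$ hypothesis of Lemma \ref{BCMS}, which as noted is automatic from the presence of an MS$(m,2)$ in the hypothesis. So the ``hardest'' step is just this quick observation about the minimal order of a bimagic square; everything else is a one-line citation of the two preceding results.
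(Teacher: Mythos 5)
Your proposal is correct and matches the paper's own (implicit) argument exactly: the paper derives this lemma by citing Construction \ref{conmscms} with $t=2$ together with Lemma \ref{BCMS}, which is precisely your chain. Your extra remark verifying the $m>1$ hypothesis of Lemma \ref{BCMS} from the existence of an MS$(m,2)$ is a small but welcome addition that the paper leaves unstated.
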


 Let $\Omega_1=\{n| 8\leq n\leq 64\}$.  An MS$(n,2)$ for each  $n\in \Omega_1$ is listed in Boyer's website \cite{Boyer}.
Recently, Chen and Li \cite{Chen2} gave two families of bimagic squares. They showed that there exists an MS$(n,2)$ for each $n\in \Omega_2\cup\Omega_3$, where
$\Omega_2=\{n| n=n_1n_2, n_1\equiv n_2\pmod2, n_1,n_2\not\in \{2,3,6\} \}$ and
$\Omega_3=\{n|n\equiv0\pmod4, n\geq 4\}$.  These are summarized as follows.

\begin{lem} \emph{\cite{Boyer,Chen2}}  \label{64}
  There exists an MS$(n,2)$ for $n\in \Omega_1\cup \Omega_2\cup \Omega_3$.
 \end{lem}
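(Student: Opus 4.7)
The statement is a compilation of existence results already established in the literature, so the plan is to organize the three families separately and cite the appropriate construction for each one, rather than produce any new argument.

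First I would dispatch $\Omega_1$ by direct exhibition. For every integer $n$ with $8\le n\le 64$, an explicit bimagic square is posted on Boyer's online census \cite{Boyer}; verifying each is a finite computation: check that the $n^2$ entries are $0,1,\ldots,n^2-1$, that each row/column/diagonal sum equals the magic number $n(n^2-1)/2$, and that each row/column/diagonal sum of squares equals $S_2(n)=n(n^2-1)(2n^2-1)/6$. Since the enumeration spans $n=8$ through $n=64$ with no gaps, this handles the first family outright (and one should observe that the lower bound $n\ge 8$ is consistent with the classical fact that no bimagic square exists for $n\le 7$ other than the trivial $n=1$).

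Second I would dispatch $\Omega_2\cup\Omega_3$ by quoting Chen and Li \cite{Chen2}. Their main contribution was the notion of a magic pair of Latin squares, from which they produced a product construction yielding an MS$(n_1 n_2, 2)$ whenever there is a magic pair on $n_1$ symbols and one on $n_2$ symbols with $n_1\equiv n_2\pmod 2$. Since such magic pairs are shown in \cite{Chen2} to exist for all $n\notin\{2,3,6\}$, this gives an MS$(n,2)$ for every $n\in\Omega_2$. A separate construction of theirs, tailored to orders $n\equiv 0\pmod 4$ with $n\ge 4$, delivers $\Omega_3$.

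The main obstacle is really only bookkeeping: one must confirm that the two references together cover every $n$ listed in $\Omega_1\cup\Omega_2\cup\Omega_3$, and that the conventions (entries $0,\ldots,n^2-1$ versus $1,\ldots,n^2$, and the definition of the magic number) in the cited sources agree with those fixed in Section 1 of the present paper up to the shift by $J_{n\times n}$. There is no genuinely new mathematics at this step; the lemma exists purely to collect a starting stock of bimagic squares on which Lemma \ref{con-CMS}, and hence the compound constructions of Section 2, will operate in the sequel.
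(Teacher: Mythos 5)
Your proposal is correct and matches the paper exactly: the paper offers no proof of this lemma beyond the surrounding text, which simply attributes the $\Omega_1$ squares to Boyer's online census \cite{Boyer} and the $\Omega_2\cup\Omega_3$ families to the constructions of Chen and Li \cite{Chen2}, precisely as you do. Your additional remarks on verification and normalization conventions are sensible bookkeeping but add nothing beyond the paper's own (citation-only) treatment.
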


Let $\Omega_4=\{mn | mn>64, 8\leq m\leq 64, m\equiv2\pmod4, n\geq5, n\equiv1\pmod2\}$, by using Lemma \ref{con-CMS} and Lemma  \ref{64}  we have
  \begin{thm}\label{main-ms2}
There exists an MS$(mn,2)$ for $mn\in \Omega_4$.
\end{thm}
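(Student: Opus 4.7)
The plan is to derive Theorem \ref{main-ms2} as a direct corollary of Lemma \ref{con-CMS} by checking that every parameter pair $(m,n)$ defining an element of $\Omega_4$ satisfies the three hypotheses of that lemma, namely: (i) an MS$(m,2)$ exists, (ii) $m(n-1)$ is even, and (iii) $n\not\in\{2,3,6\}$. No new construction is required; the work is purely parameter verification.

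First I would verify hypothesis (i). Since $mn\in \Omega_4$ forces $8\le m\le 64$, we have $m\in\Omega_1$, so Lemma \ref{64} immediately yields an MS$(m,2)$. Next, for hypothesis (ii), the condition $m\equiv 2\pmod 4$ implies $m$ is even, so $m(n-1)$ is even regardless of the parity of $n-1$; in particular the parity restriction of Lemma \ref{con-CMS} is satisfied. Finally, hypothesis (iii) follows because $n\geq 5$ and $n\equiv 1\pmod 2$: the lower bound rules out $n=2,3$ and the oddness rules out $n=6$.

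With all three hypotheses in place, Lemma \ref{con-CMS} produces an MS$(mn,2)$ for each such pair $(m,n)$, which is exactly the conclusion of Theorem \ref{main-ms2}. The extra condition $mn>64$ in the definition of $\Omega_4$ plays no role in the proof itself; it merely ensures that the bimagic squares produced lie outside the range $\Omega_1$ already handled by Boyer's catalogue, so the theorem contributes genuinely new orders. There is essentially no obstacle to overcome at this stage, since the substantive content---the product-type construction for bimagic squares via complementary magic squares---has already been absorbed into Construction \ref{conmscms} and Lemma \ref{con-CMS}; the present theorem simply packages the resulting parameter set in a clean form.
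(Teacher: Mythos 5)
Your proposal is correct and follows exactly the same route as the paper: invoke Lemma \ref{64} for an MS$(m,2)$ (since $8\le m\le 64$ puts $m\in\Omega_1$) and then apply Lemma \ref{con-CMS}, with the evenness of $m$ and the conditions $n\ge 5$, $n$ odd guaranteeing its hypotheses. Your write-up is simply a more explicit verification of the same two-step argument the paper gives.
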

 \begin{proof}
Let $mn\in \Omega_4$, there exists an MS$(m,2)$ by Lemma \ref{64}.  Since  $m$ is even, by Lemma \ref{con-CMS}, there exists an MS$(mn,2)$. \end{proof}

\noindent {\bf Remark } We should point out that  MS$(mn,2)$s given in Theorem \ref{main-ms2} are of orders $mn\equiv 2 \pmod{4}$, which are the  new ones comparing to those given in Lemma \ref{64}.

 %%%%%%%%%%%%%%%%%%%%%%%%%%%%%
\section{ New families of  trimagic squares}
In this section,  complementary  bimagic squares are considered  and some new families of trimagic squares are obtained.

\begin{lem}\label{t-5}
For even $t$, if there exists an MS$(n,t)$, then there exists  a $2l$-SCMS$(n,t)$ for $l\geq1$.
\end{lem}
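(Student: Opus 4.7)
The plan is to construct a $2$-SCMS$(n,t)$ directly from the given MS$(n,t)$ by pairing it with its entrywise complement, and then replicate the pair $l$ times to obtain a $2l$-SCMS$(n,t)$.

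Concretely, for an MS$(n,t)$ $B=(b_{i,j})$ over $I_{n^2}$, define the complement $B'=(n^2-1)J_{n\times n}-B$. Since $b\mapsto n^2-1-b$ is a bijection of $I_{n^2}$, the entries of $B'$ again exhaust $\{0,1,\dots,n^2-1\}$. For each $e$ with $1\leq e\leq t$ and each row $i$, binomial expansion gives
\[
\sum_{j=0}^{n-1}(n^2-1-b_{i,j})^e=\sum_{k=0}^{e}\binom{e}{k}(n^2-1)^{e-k}(-1)^k S_k(n),
\]
with the convention $S_0(n)=n$; this is independent of $i$ because $B$ is an MS$(n,t)$ and $e\leq t$. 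Summing over all rows of $B'$ and equating the result to the total $e$-th power $nS_e(n)$ of the entries in $I_{n^2}$ forces this constant to equal $S_e(n)$. The identical calculation for columns, main diagonal, and back diagonal shows that $B'$ is itself an MS$(n,t)$.

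Next I would verify that $\{B,B'\}$ is a $2$-SCMS$(n,t)$. Expanding as above with $e$ replaced by $t+1$ yields
\[
\sum_{j=0}^{n-1}(n^2-1-b_{i,j})^{t+1}=\sum_{k=0}^{t}\binom{t+1}{k}(n^2-1)^{t+1-k}(-1)^k S_k(n)+(-1)^{t+1}\sum_{j=0}^{n-1}b_{i,j}^{t+1}.
\]
Adding $\sum_{j} b_{i,j}^{t+1}$ to this and using that $t$ is even (so $(-1)^{t+1}=-1$) cancels the unknown ``leading'' $\sum_j b_{i,j}^{t+1}$ term and leaves a constant independent of $i$, depending only on $n$ and $t$. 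Summing this constant over all $n$ rows of both squares must reproduce the total $(t+1)$-st power $2nS_{t+1}(n)$ of all entries, so the constant equals $2S_{t+1}(n)$; this is exactly condition (R1). Repeating the argument verbatim with columns, the main diagonal, and the back diagonal establishes (R2), (R3), and (R4).

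Finally, for any $l\geq 1$, form the list $\{B,B',B,B',\dots,B,B'\}$ of length $2l$ by concatenating $l$ copies of the pair. Each sum appearing in (R1)--(R4) becomes $l$ times the value for the pair, namely $l\cdot 2S_{t+1}(n)=2l\cdot S_{t+1}(n)$, which is precisely what the definition of a $2l$-SCMS$(n,t)$ demands. The main technical point is the parity cancellation: for odd $t$ the sign $(-1)^{t+1}$ would equal $+1$, so the unknown quantity $\sum_j b_{i,j}^{t+1}$ would add rather than cancel and could in principle vary with $i$; this is precisely why the hypothesis that $t$ is even is essential to the complementation approach.
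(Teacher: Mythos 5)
Your proposal is correct and follows essentially the same route as the paper: complement the given square via $B'=(n^2-1)J_{n\times n}-B$, use the binomial expansion with the parity of $t+1$ to cancel the unknown $(t+1)$-st power sums, pin down the resulting constant as $2S_{t+1}(n)$ by averaging over all rows, and replicate the pair $l$ times. The only difference is that you spell out the verification that $B'$ is itself an MS$(n,t)$, which the paper dismisses as ``readily checked.''
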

\begin{proof}
 Suppose that $A$ is an  MS$(n,t)$ with even $t$.  Let $B=(n^2-1)J_{n\times n}-A$. It is readily checked that $B$ is also an MS$(n,t)$.  Note that  $t+1$ is odd,  for each $i\in I_n$, we have
   \begin{center}
$\sum\limits_{j=0}^{n-1}(a_{i,j}^{t+1}+b_{i,j}^{t+1})=\sum\limits_{j=0}^{n-1}(a_{i,j}^{t+1}+(n^2-1-a_{i,j})^{t+1})$\\
$\ \ \  \ \ \  \ \  \ \  \  \ \ \ \ \  \ \ \  \ \ \ \ \  \  \ \ =\sum\limits_{j=0}^{n-1}\sum\limits_{k=0}^{t}\binom{t+1}{k}(-1)^k (n^2-1)^{t+1-k}a_{i,j}^k.$\\
$\ \ \  \ \ \  \ \  \ \  \  \ \ \ \ \  \ \ \  \ \ \ \ \  \  \ \ =\sum\limits_{k=0}^{t}\binom{t+1}{k}(-1)^k (n^2-1)^{t+1-k}\sum\limits_{j=0}^{n-1}a_{i,j}^k.$\\
$\ \ \  \ \ \  \ \  \ \  \  \ \ \ \ \  \ \ \  \ \ \  \  \ \ =\sum\limits_{k=0}^{t}\binom{t+1}{k}(-1)^k (n^2-1)^{t+1-k}S_k(n),$
\end{center}
which is independent of $i$. Hence,
   \begin{center}
   $\sum\limits_{j=0}^{n-1}(a_{i,j}^{t+1}+b_{i,j}^{t+1})=\frac{1}{n} \sum\limits_{i=0}^{n-1}\sum\limits_{j=0}^{n-1}(a_{i,j}^{t+1}+b_{i,j}^{t+1})=\frac{1}{n}(\sum\limits_{i=0}^{n-1}\sum\limits_{j=0}^{n-1}a_{i,j}^{t+1}
   +\sum\limits_{i=0}^{n-1}\sum\limits_{j=0}^{n-1}b_{i,j}^{t+1})=2S_{t+1}(n).$
      \end{center}
Similarly, we have $\sum\limits_{i=0}^{n-1}(a_{i,j}^{t+1}+b_{i,j}^{t+1})=2S_{t+1}(n),j\in I_n,$ and
   \begin{center}
 $ \sum\limits_{i=0}^{n-1}(a_{i,i}^{t+1}+b_{i,i}^{t+1})=2S_{t+1}(n),\ \  \sum\limits_{i=0}^{n-1}(a_{i,n-1-i}^{t+1}+b_{i,n-1-i}^{t+1})=2S_{t+1}(n).$
 \end{center}
Thus, $\{A, B\}$ is a 2-SCMS$(n,t)$.
For $l\geq1$, let $A_i=A$, $B_i=B$, $i\in I_l$. It is readily checked that $\{A_0,B_0,A_1,B_1,\dots,A_{l-1},B_{l-1}\}$ is a $2l$-SCMS$(n,t)$.
\end{proof}

By Lemma \ref{t-5} and Lemma \ref{64}  we get the following.
 \begin{lem} Let  $l$ be a positive integer, then there exists a $2l$-SCMS$(n,2)$ for $n\in \Omega_1\cup \Omega_2\cup\Omega_3$.
\end{lem}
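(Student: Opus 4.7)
The plan is to combine the two preceding lemmas directly, since the statement is essentially a corollary. Lemma \ref{64} supplies the raw material (an MS$(n,2)$) and Lemma \ref{t-5} upgrades any MS$(n,t)$ with even $t$ into a $2l$-SCMS$(n,t)$ for every $l\geq 1$. Here $t=2$ is even, so Lemma \ref{t-5} applies verbatim.

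First I would fix $n \in \Omega_1 \cup \Omega_2 \cup \Omega_3$ and invoke Lemma \ref{64} to obtain an MS$(n,2)$, call it $A$. Then, since $t=2$ is even, Lemma \ref{t-5} produces a $2l$-SCMS$(n,2)$ for any positive integer $l$ (concretely, built by pairing $A$ with its complement $B=(n^2-1)J_{n\times n}-A$ and repeating the pair $l$ times). The combination yields the desired family directly.

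The only thing worth checking before writing up is that the hypotheses of Lemma \ref{t-5} are genuinely met for $t=2$: namely that we have the parity needed ($t$ even, so $t+1=3$ is odd, which is what makes the complementary cancellation work in that lemma), and that an MS$(n,2)$ is available for every $n$ in the claimed range, which is exactly the content of Lemma \ref{64}. There is no real obstacle here; the proof is a two-line chaining of cited results, and the main care is simply in quoting the two lemmas in the correct order and noting that $t=2$ is the instance being used.
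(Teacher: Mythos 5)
Your proposal is correct and matches the paper exactly: the paper derives this lemma by combining Lemma \ref{t-5} (applied with $t=2$, which is even) with the existence of an MS$(n,2)$ for $n\in\Omega_1\cup\Omega_2\cup\Omega_3$ from Lemma \ref{64}. No further commentary is needed.
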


Some  trimagic squares are listed in the Boyer's website \cite{Boyer}.
   \begin{lem}\label{3-ms}  \emph{(\cite{Boyer})}
  There exists an MS($n, 3$) for all $m\in \{12,16,24,32,40,64,128\}$.
  \end{lem}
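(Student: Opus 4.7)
The statement is essentially a catalogue lemma, cited from Boyer's compendium, so my proof proposal amounts to exhibition and verification rather than a derivation from first principles. The plan is, for each of the seven orders $n\in\{12,16,24,32,40,64,128\}$, to present (or cite from the website) a concrete $n\times n$ matrix $M$ with entries $0,1,\dots,n^2-1$ and to verify directly that $M$, $M^{*2}$ and $M^{*3}$ all have constant row sums, column sums, main-diagonal sum, and back-diagonal sum, equal respectively to $S_1(n)$, $S_2(n)$ and $S_3(n)$. Once a candidate matrix is written down, this is a finite routine computation.

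Before resorting to pure tabulation I would check whether any of the listed orders can be produced by the product/compound machinery already in force. Using Derksen et al.'s product $A*B$, an MS$(mn,3)$ arises from an MS$(m,3)$ and an MS$(n,3)$; using Construction~\ref{conmscms} an MS$(mn,3)$ arises from an MS$(m,3)$ together with an $m$-SCMS$(n,2)$. However, no MS$(k,3)$ exists for $k<12$, so these bootstrap strategies cannot reach any $n$ in the list except possibly by using $n=12$ itself as a factor, and none of the remaining entries is a multiple of $12$. Consequently the seven cases must each be handled on their own, and my proposal falls back on exhibiting explicit squares.

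For each order I would invoke a historical construction: Tarry's 1905 square of order $128$ (the first trimagic square ever built, via a specific algebraic layering over $\mathbb{Z}_8\times\mathbb{Z}_{16}$); Cazalas' and Benson's squares of orders $64$ and $32$; Pan Fengchu's and Boyer's squares for $16, 24, 40$; and Walter Trump's landmark $12\times 12$ trimagic square (2002), which settles the smallest admissible order. In each case the verification reduces to evaluating eight sums per exponent $e\in\{1,2,3\}$ (row/column/diagonal/anti-diagonal totals) and comparing with $S_e(n)=\tfrac1n\sum_{k=0}^{n^2-1}k^e$; this is mechanical and easily done by computer algebra.

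The genuine difficulty lies entirely in the case $n=12$, and to a lesser extent $n=16, 24$. It is well known that no MS$(n,3)$ exists for $n\leq 11$, and Trump's order-$12$ construction was the culmination of a long search; there is no short self-contained derivation known. Accordingly, I expect the hard step to be not the verification phase but justifying the existence of the order-$12$ square, which in an honest write-up must simply be delegated to the reference \cite{Boyer}. The remaining orders are routine once one accepts the catalogue, so the whole lemma is appropriately recorded as an existence result whose proof is exhibition plus arithmetic checking.
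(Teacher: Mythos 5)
Your proposal matches the paper exactly: Lemma~\ref{3-ms} is stated as a pure citation to Boyer's compendium, with no proof given in the text, and your conclusion that the existence of each square (especially order $12$) must simply be delegated to \cite{Boyer} and then verified by finite arithmetic is precisely what the paper does. The additional observations about why the listed orders cannot be bootstrapped from the product constructions are correct but not needed, since the paper never attempts such a derivation.
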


   \begin{lem}\label{qm2}
  There exists an MS($2^m, 3$) for all $m\geq4$.
  \end{lem}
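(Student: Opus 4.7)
The plan is to handle the small cases $m\in\{4,5,6,7\}$ directly and then reduce every larger case to a single application of Construction~\ref{conmscms} with a fixed outer dimension. For $m\in\{4,5,6,7\}$ we have $2^m\in\{16,32,64,128\}$, and each of these values appears explicitly in the list in Lemma~\ref{3-ms}, so there is nothing to prove in these cases.

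For $m\ge 8$, I intend to invoke Construction~\ref{conmscms} with $t=3$, outer order $m'=16$, and inner order $n=2^{m-4}$. The outer square is supplied by Lemma~\ref{3-ms}, which provides an MS$(16,3)$. The inner structure required by Construction~\ref{conmscms} is a $16$-SCMS$(2^{m-4},2)$. This exists by the lemma immediately preceding Lemma~\ref{qm2}: that lemma produces a $2l$-SCMS$(n,2)$ for every positive integer $l$ and every $n\in\Omega_1\cup\Omega_2\cup\Omega_3$; taking $l=8$ so that $2l=16$, and noting that for $m\ge 8$ we have $2^{m-4}\ge 16$ and $2^{m-4}\equiv 0\pmod 4$, so $2^{m-4}\in\Omega_3$, the required $16$-SCMS$(2^{m-4},2)$ is supplied. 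Construction~\ref{conmscms} then yields an MS$(16\cdot 2^{m-4},3)=$ MS$(2^m,3)$, finishing the induction-free argument.

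There is no genuine obstacle here; the proof is a bookkeeping combination of results already in place. The only items worth double-checking are: (i) the outer order $16$ is even, so the $16$-SCMS is achievable as $2l$-SCMS with $l=8$; (ii) the inclusion $2^{m-4}\in\Omega_3$ holds for every $m\ge 8$; and (iii) the finite list $\{16,32,64,128\}$ in Lemma~\ref{3-ms} exactly covers the unavoidable base cases $m\in\{4,5,6,7\}$ that our recursive step cannot reach (since it requires $m-4\ge 4$). Once these three trivial checks are recorded, the conclusion for all $m\ge 4$ follows.
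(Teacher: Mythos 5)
Your proof is correct, but it takes a genuinely different route from the paper's. The paper treats the base cases $m\in\{4,5,6,7\}$ exactly as you do, via Lemma~\ref{3-ms}; for $m\ge 8$, however, it writes $m=4n_1+5n_2+6n_3+7n_4$ with nonnegative integers $n_i$ and iterates the classical product construction from Section~1 (the Derksen--Eggermont--van den Essen result that $A*B$ is an MS$(mn,t)$ whenever $A$ is an MS$(m,t)$ and $B$ is an MS$(n,t)$) on the four seed squares of orders $16,32,64,128$. You instead make a single application of Construction~\ref{conmscms} with $t=3$, outer square an MS$(16,3)$, and inner data a $16$-SCMS$(2^{m-4},2)$ obtained from the $2l$-SCMS$(n,2)$ existence lemma with $l=8$; your checks that $16$ is an admissible value of $2l$, that $2^{m-4}\ge 16$ and $2^{m-4}\equiv 0\pmod 4$ so that $2^{m-4}\in\Omega_3$, and that the seeds $\{16,32,64,128\}$ cover $m\in\{4,5,6,7\}$ are all valid, so the argument is complete. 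The trade-off is this: the paper's argument is more elementary and self-contained, relying only on the classical compound product and the four trimagic seeds, with no bimagic machinery; your argument needs only the single trimagic seed MS$(16,3)$ for the recursive step, but at the cost of importing Lemma~\ref{t-5}, Lemma~\ref{64} and Construction~\ref{conmscms}. Both proofs are sound; yours has the mild aesthetic advantage of showcasing the paper's new construction, and the mild disadvantage that it establishes a result already reachable by the older product method, so it does not demonstrate any new power of Construction~\ref{conmscms}.
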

    \begin{proof}
  There exists an MS($2^m, 3$) for $m\in \{4,5,6,7\}$ by Lemma \ref{3-ms}. For any $m\ge 8$, we can write $m=4n_1+5n_2+6n_3+7n_4$, where $n_1,n_2,n_3,n_4$ are nonnegative integers. By the product construction of multimagic squares mentioned in Section 1 we get  an MS($2^m, 3$).
\end{proof}

 \begin{lem}\label{evenmoddn}
For even $m$ and odd $t$, if there exists an MS$(m,t)$ and there exists an MS$(n,t-1)$, then there exists an MS$(mn,t)$.
  \end{lem}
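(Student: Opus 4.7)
The plan is to combine Lemma \ref{t-5} with Construction \ref{conmscms} in a straightforward way. The hypotheses give us an MS$(m,t)$ (with $m$ even, $t$ odd) and an MS$(n,t-1)$. The key observation is that since $t$ is odd, the quantity $t-1$ is \emph{even}, so Lemma \ref{t-5} can be applied with its parameter ``$t$'' replaced by $t-1$.

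First, I would invoke Lemma \ref{t-5} on the given MS$(n,t-1)$: because $t-1$ is even, the lemma produces a $2l$-SCMS$(n,t-1)$ for any positive integer $l$. Since $m$ is even and necessarily $m\geq 4$ (no MS$(2,t)$ exists, and to apply Construction \ref{conmscms} we already need $m\geq 4$ as noted in its proof), we may take $l=m/2$, thereby obtaining an $m$-SCMS$(n,t-1)$.

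Next, I would feed the MS$(m,t)$ together with this $m$-SCMS$(n,t-1)$ into Construction \ref{conmscms}. The hypotheses of that construction require $t\geq 2$, which holds because $t$ is odd and the lemma is only nontrivial for $t\geq 3$ (the case $t=1$ reduces to the classical product construction of ordinary magic squares). Construction \ref{conmscms} then delivers an MS$(mn,t)$, which is exactly what we need.

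There is no real obstacle: the entire argument is a two-line chain of cited results, and the only thing to check carefully is the parity bookkeeping, namely that ``$t$ odd'' is precisely what makes ``$t-1$ even,'' which is the parity condition on which Lemma \ref{t-5} depends, and that ``$m$ even'' is precisely what allows us to match $m$ to the form $2l$ required by the output of Lemma \ref{t-5}. No additional lemmas or auxiliary computations are needed.
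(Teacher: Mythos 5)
Your proposal is correct and follows exactly the paper's own argument: apply Lemma \ref{t-5} with even parameter $t-1$ to the given MS$(n,t-1)$, take $l=m/2$ to obtain an $m$-SCMS$(n,t-1)$, and then invoke Construction \ref{conmscms} together with the MS$(m,t)$. The paper's proof is the same two-step chain, just stated more tersely.
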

  \begin{proof}
  Since there exists an   MS$(n,t-1)$, $m$ is even and $t$ is odd,   there exists an   $m$-SCMS$(n,t-1)$ by Lemma \ref{t-5}.
 Hence, there exists an MS$(mn,t)$ by Construction \ref{conmscms}.
\end{proof}

\begin{thm}\label{main-ms3}
There exists an MS$(mn,3)$ for any $m\in M=\{12,24,40\}\cup\{2^l|l\geq4\}$ and $n\in \Omega_1\cup \Omega_2 \cup \Omega_3 \cup \Omega_4$.
\end{thm}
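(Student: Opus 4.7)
The plan is to chain together the results already established in the paper, specifically Lemma \ref{evenmoddn}, which is the key tool: for even $m$ and odd $t$, an MS$(m,t)$ together with an MS$(n,t-1)$ yields an MS$(mn,t)$. Here we take $t=3$, so we need to verify two ingredients for every admissible $m$ and $n$: (i) that $m$ is even and an MS$(m,3)$ exists, and (ii) that an MS$(n,2)$ exists.

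First, I would handle the existence of MS$(m,3)$ for $m\in M=\{12,24,40\}\cup\{2^l\mid l\geq4\}$. The three sporadic orders $12,24,40$ are covered directly by Lemma \ref{3-ms}, and the powers of two $2^l$ with $l\geq 4$ are covered by Lemma \ref{qm2}. Note that every element of $M$ is even, which is precisely what the hypothesis of Lemma \ref{evenmoddn} demands for $m$.

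Next, I would observe that the set $\Omega_1\cup\Omega_2\cup\Omega_3\cup\Omega_4$ is exactly the collection of orders for which an MS$(n,2)$ is known to exist: Lemma \ref{64} provides MS$(n,2)$ for $n\in\Omega_1\cup\Omega_2\cup\Omega_3$, and Theorem \ref{main-ms2} extends this to $n\in\Omega_4$. Hence for every $n$ in the stated range, an MS$(n,2)$ is available.

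With both ingredients in hand, applying Lemma \ref{evenmoddn} with $t=3$ yields an MS$(mn,3)$, which is the desired conclusion. There is essentially no obstacle here; the proof is a bookkeeping assembly of the preceding lemmas and theorems. The only point worth double-checking is that the hypothesis ``$m$ even'' in Lemma \ref{evenmoddn} is satisfied for every element of $M$, which is immediate since $12$, $24$, $40$ and every $2^l$ with $l\geq 4$ are all even.
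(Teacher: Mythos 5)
Your proposal is correct and follows exactly the same route as the paper: existence of MS$(m,3)$ from Lemma \ref{3-ms} and Lemma \ref{qm2}, existence of MS$(n,2)$ from Lemma \ref{64} and Theorem \ref{main-ms2}, then Lemma \ref{evenmoddn} with $t=3$. Your explicit check that every element of $M$ is even is a small point the paper leaves implicit, but otherwise the two arguments coincide.
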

\begin{proof}
For any $m\in M=\{12,24,40\}\cup\{2^l|l\geq4\}$, there exists an MS$(m,3)$ by Lemma \ref{3-ms} and Lemma \ref{qm2}. For any $n\in \Omega_1\cup \Omega_2 \cup \Omega_3 \cup \Omega_4$,  there exists an MS$(n,2)$  by Lemma \ref{64} and Theorem \ref{main-ms2}. So there exists an MS$(mn,3)$ by Lemma \ref{evenmoddn}.
\end{proof}

 %\vskip10pt

% \noindent\textbf{Acknowledgements} The authors would like to thank Professor L. Zhu of Suzhou University for helpful discussions.
%The authors would like to thank Professor J. Lei of Hebei Normal  University for useful advices.

\end{document}